\newtheorem{sen}{Proposition}
\newtheorem {M-lemma}{Lemma} 
\newtheorem* {m-theorem}{Main theorem}
\begin{document}
\title{On a lattice of relational spaces (reducts) for the order of integers}
\author[A.L. Semenov, S.F. Soprunov]{A.L. Semenov, S.F. Soprunov}
\begin{abstract} 
We investigate the definability (reducts) lattice of the order of integers and describe a sublattice generated by relations 'between', 'cycle', 'separation', 'neighbor', '1-codirection', 'order' and equality'. Some open questions are proposed.
\end{abstract}
\maketitle

\section{Introduction}
In his works of the beginning of the last century \cite{hun} Edward Huntington pointed out three relations that can be defined through the relation of order on any ordered set. These relations are:
\begin{align*}
B(a,b,c)&\rightleftharpoons (a<b<c)\lor(a>b>c)\\
C(a,b,c)&\rightleftharpoons (a<b<c)\lor(b<c<a)\lor(c<a<b)\\
S(a,b,c,d)&\rightleftharpoons \Big(B(a,b,c)\lor B(a,d,c)\Big)\land\Big(B(b,a,d)\lor B(b,c,d)\Big)
\end{align*}

We fix the universe on which the relation of order and all relations defined through it are set.
As it is well known (see, for example, the work of the authors of \cite{we}), classes of relations closed by definability (definability spaces) form a lattice. If the original order relation and equality relation are added to the spaces corresponding to the Huntington relations, then the following lattice is obtained:

\begin{figure}[h]
     \centering
     \includegraphics[width=70mm]{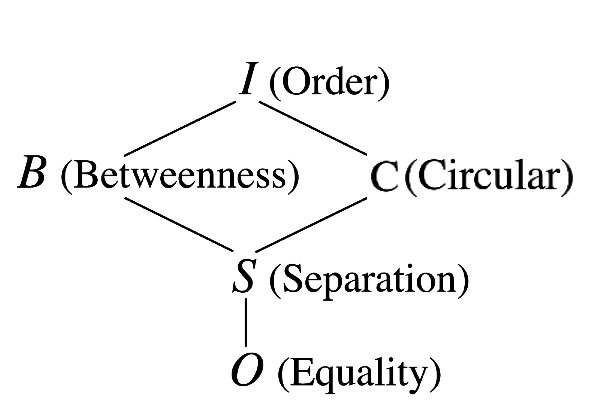}
     \centering
     \caption{Lattice for $\langle \mathbb Q, < \rangle$}
     \label{fig:Q-fig}
 \end{figure}

Of course, for some linearly ordered sets, elements of this lattice may coincide. However, for example, for rational numbers it is not difficult to prove that all elements are different.

More relations can be defined through the order. In particular, the relation of "codirectional" or "equipollence" of segments has a clear geometric meaning:

\begin{align*}
E(a,b,c,d)&\rightleftharpoons \Big((a<b)\land(c<d)\Big)\lor\Big((a>b)\land(c>d)\Big)
\end{align*}

However, it is not difficult to see that it generates the same definability space as "between". 

Through the order (and even through the "cycle"), it is possible to determine the "successor" - the smallest element larger than the given one, and the correspondent binary relation "succession". However, in some structures, for example, in the order of rational numbers, this relation degenerates. The situation is similar for the symmetrical "neighborhood" relation, with the meaning "nothing between two elements".

Finally, in the case of "discrete" orders, when the relations "succession" and "neighboring" are nontrivial, a series of relations "successor to the successor" arise, etc. For the case of integers with successor, we described the definability lattice  in \cite{we}. Namely, the elements of this lattice are divided into three series parameterized by natural number $n$: <<$+n$>>, <<distance $n$>>, <<$n$-codirection>> (4-ary relation) (exact definitions are in \cite{we}). 

Later on, we use the "neighborhood" relationship, which is the same as "distance 1". We will also use "1-codirection", which can  be defined as the co-directionality of neighbors. 

After the integers with successor the natural next step is to consider the order on integers.

In this paper, we study the definability sublattice of the order of integers located between the "1-codirection" and the order. 

Of course, the definability lattice for the order of integers contains the whole lattice of the integers with successor. 
 \section{Preliminary remarks}

 In this paper we continue to use a special case of Svenonius theorem for structures that have upward complete (elementary) extension.

 We use the ‘topology of pointwise convergence’ of the permutation group on an infinite countable set $S$. So, we naturally define closed (in this topology) subgroups of $Sym(S)$ - the group of all permutations of $S$.

 It is known (see, for example, \cite{max})

\begin{sen}
  The lattice of definability spaces of a structure having a complete upward extension is anti-isomorphic to the lattice of automorphism groups   of the spaces.
\end{sen}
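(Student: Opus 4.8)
The plan is to realize the asserted anti-isomorphism as the Galois correspondence attached to the incidence relation ``the permutation $g$ preserves the relation $R$''. First I would introduce the two polarities. For a set $\mathcal R$ of relations on $S$ let $\mathrm{Aut}(\mathcal R)$ be the group of all permutations in $Sym(S)$ preserving every member of $\mathcal R$; for a set $G$ of permutations let $\mathrm{Inv}(G)$ be the set of all relations preserved by every element of $G$. Both maps are evidently order-reversing, and a routine check shows that $\mathrm{Aut}(\mathcal R)$ is always closed in the topology of pointwise convergence and that $\mathrm{Inv}(G)$ is always closed under definability, i.e. is a definability space. Thus the two composites $\mathrm{Inv}\circ\mathrm{Aut}$ and $\mathrm{Aut}\circ\mathrm{Inv}$ are closure operators, and to obtain an anti-isomorphism it suffices to prove that each is the identity on the closed objects of interest — the definability spaces on one side and the closed subgroups on the other.

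Next I would dispose of the group side. For a closed subgroup $G$ of $Sym(S)$ one wants $\mathrm{Aut}(\mathrm{Inv}(G))=G$. The inclusion $G\subseteq\mathrm{Aut}(\mathrm{Inv}(G))$ is immediate; for the reverse I would argue topologically. If $h$ preserves every $G$-invariant relation then, since the orbit relations of $G$ on finite tuples are themselves $G$-invariant, $h$ must agree with some element of $G$ on every finite set, so $h$ lies in the pointwise-convergence closure of $G$, which equals $G$ by hypothesis. This direction is a fact about permutation groups and their topology and uses no model theory.

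The heart of the argument is the space side: for a definability space $D$ I must show $\mathrm{Inv}(\mathrm{Aut}(D))=D$, that is, that every relation invariant under all automorphisms of $D$ is already definable in $D$. The inclusion $D\subseteq\mathrm{Inv}(\mathrm{Aut}(D))$ is trivial, since definable relations are automorphism-invariant, so the content is the converse, and this is exactly where the hypothesis of an upward complete elementary extension enters. Here I would invoke the special case of Svenonius' theorem recorded earlier in the preliminaries: passing to the extension guarantees a supply of automorphisms rich enough that invariance under all of them forces definability, so a non-definable relation must be moved by some automorphism, contradicting invariance. I expect this implication to be the main obstacle, as it is the only step that genuinely needs the extension hypothesis and the full strength of Svenonius' theorem; everything else is formal Galois-connection bookkeeping.

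Finally I would assemble the pieces. Having shown that $\mathrm{Aut}$ and $\mathrm{Inv}$ restrict to mutually inverse bijections between the set of definability spaces and the set of closed subgroups, and having already noted that both maps reverse inclusion, it follows that they carry joins to meets and meets to joins; hence they constitute an anti-isomorphism of the two lattices, which is the assertion.
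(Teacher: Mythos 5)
The paper never proves this proposition: it is quoted as known, with a pointer to \cite{max}, and the ``special case of Svenonius' theorem'' that you invoke as a previously recorded tool is, inside this paper, essentially the proposition itself. So your proposal has to stand on its own, and it cannot: the space-side identity $\mathrm{Inv}(\mathrm{Aut}(D))=D$, which you correctly identify as the heart of the matter, is \emph{false} in this setting, not merely in need of proof. Two concrete failures. First, the universe matters: if the polarities are taken over $\mathbb Z$ itself, the spaces generated by $+1$ and by $<$ --- two distinct vertices of the paper's Diagram --- have the same automorphism group, namely the translations of $\mathbb Z$, so no anti-isomorphism with subgroups of $Sym(\mathbb Z)$ can exist; the groups must be taken inside $Sym(\mathbb Q\times\mathbb Z)$, with each reduct identified with its canonical extension, a bookkeeping step your single-set setup never performs. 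Second, even over the extension the identity fails: let $P(x,y)$ say ``$x,y$ lie on the same vertical and $x-y$ is even''. Every shift maps verticals to verticals by translations, so $P$ is preserved by $\Gamma(<)$, i.e. by $\mathrm{Aut}$ of the top space of the interval; but $P$ is not definable from the order, since by Lemma \ref{m} it would have a boundary $m$, whereas the pairs $(x,x+2k)$ and $(x,x+2k+1)$ are $m$-indistinguishable once $2k>m$ and $P$ holds on one but not the other. Hence $\mathrm{Inv}(\mathrm{Aut}(D))\supsetneq D$ already for $D$ the space of $<$.

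The root cause is that $\mathbb Q\times\mathbb Z$ is not $\omega$-categorical, so the classical Galois bijection between \emph{all} definability-closed families and \emph{all} closed subgroups --- which is what your final paragraph asserts --- genuinely fails here; $\mathrm{Aut}$ and $\mathrm{Inv}$ are not mutually inverse. What the proposition actually asserts, and what \cite{max} proves, is weaker and relativized: the map $D\mapsto\mathrm{Aut}(D)$ (groups taken in $Sym(\mathbb Q\times\mathbb Z)$) is injective and order-reflecting onto its image, the inverse on the image being $G\mapsto\mathrm{Inv}(G)\cap\mathrm{Def}(\langle\mathbb Z,<\rangle)$; the Svenonius-type ingredient one must prove is ``if $R$ is definable in $\langle\mathbb Z,<\rangle$ and its extension is preserved by $\mathrm{Aut}(D)$, then $R\in D$'', not the unrelativized fixed-point identity. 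Your group-side argument via orbit relations (that $\mathrm{Aut}(\mathrm{Inv}(G))=G$ for closed $G$) is correct, but it is not what the proposition needs and does not repair the space side.
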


We will sometimes  call the space generated by the relation $R$ simply $R$, or the space $R$.

In the following text, "definability" means definability in the structure of integers with order. 

 The structure $\langle\mathbb Z, <\rangle$ 
 has an upward complete extension, see \cite{max}. 
 It is $\langle \mathbb Q, < \rangle\times\langle\mathbb Z,+1\rangle$. 
 We will denote it $\mathbb{Q\times\mathbb Z}$. 
We can imagine copies of integers as vertical lines placed on the horizontal axis of rationals.

Thus, the lattice of definability spaces of the structure $\langle\mathbb Z, <\rangle$ is completely determined by the lattice of closed supergroups of the shifts'  (monotonically increasing permutations) group of the
 structure $\mathbb{Q\times\mathbb Z}$.

 Recall that the lattice of spaces of the structure $\langle\mathbb Q ,<\rangle$, as well as the description of the corresponding permutation groups $\mathbb Q$ have long been known e.g.
 \cite{cam}).



  

 
 The lattice of the structure $\langle\mathbb Z, +1\rangle$ is fully described by the authors of this article in \cite{we}.

 \section{ The main theorem. Formulation}
 
 Following [4], we introduce the notation
 
${A_1}_n(x,y,z,t)\rightleftharpoons (x-y)=(z-t)\land |x-y|=n$.

Of course, we mean the corresponding formula in the signature of order. 
In our structure, if $x$ and $y$ lie on different verticals, the relation ${A_1}_n$ is false, the same for $z$ and $t$.

As we said above, this relation is naturally called $n$-\emph{codirection}.

Thus, ${A_1}_1$ is 1-co-direction.
 
In this paper, we prove that the lattice of spaces of the structure $\langle\mathbb Z, <\rangle$ contains the following sublattice:

 \begin{figure}[htp]
    \centering
\includegraphics[width=50mm]{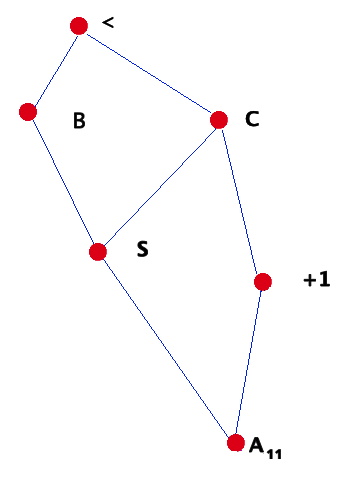}
   \caption{The Diagram}
    \label{z-fig}
\end{figure}


The main result of the paper is:
\begin{m-theorem}\label{m-sen}
  $ $  

The Diagram (Fig.\ref{z-fig}) shows all definability spaces, greater than ${A_1}_1$ and smaller than <. All relations represented by the edges of the Diagram are strict inclusions (of the space located below into the space located above). All other inclusion relations between the vertices of the Diagram are contained in its transitive closure.

\end{m-theorem}


It is possible that there are elements in the lattice that are not comparable to the 3 non-trivial Huntington relations and are smaller than co-direction, but non of them is known to us.

The fact of inclusion of spaces connected by a segment of the Diagram can be
verified directly by constructing a formula. \\

The main idea of our proof is that properties of the sublattice in consideration are in some sense determined by the properties of the lattices of the structures $\langle\mathbb Q ,<\rangle$ and $\langle\mathbb Z, +1\rangle$.

\section{The main theorem. Proof}\label{disc}

We begin by describing the automorphism groups of the relations shown in Fig. \ref{z-fig}

If $R$ is a definable relation, then by $\Gamma(R)$ we denote the group of automorphisms (the group of permutations) of the structure $\mathbb Q\times\mathbb Z$ preserving this relation. In cases where we are talking about a relation on another structure, the structure is explicitly indicated as an argument: so $\Gamma(C)$ is the automorphism group of the "cycle" relation on $\mathbb Q\times\mathbb Z$, and $\Gamma(\mathbb Q,C)$ is the group automorphisms of the "cycle" relation on the structure $\langle\mathbb Q, <\rangle$.

Let's call a permutation $g$ acting on $\mathbb Q\times\mathbb Z$, \emph{systemic} if the image of each vertical $a$ coincides with some vertical $b$. We say that systemic  $g$ \emph{initiates} on $\mathbb Q$ the permutation that maps $a$ to $b$. Similarly, the term \emph{initiates} is used for  groups of systemic permutations.

We call a systemic permutation \emph{positive} if it preserves the order on each vertical and \emph{negative} if it reverses the order on each vertical.

Let us call a positive permutation \emph{shift} if it initiates an increasing mapping on $\mathbb Q$. The \emph{shifts'} group is the group of all shifts.  We call this permutation \emph{vertical shift} if it initiates identical mapping on $\mathbb Q$.

\begin{M-lemma}\label{sys}
$ $
\begin{enumerate}
\item The initiation mapping acts as a homomorphism of the lattice of supergroups of systemic permutations to the lattice of closed supergroups of $\Gamma(\mathbb Q,<)$. On each systemic supergroup initiation acts as a group homomorphism.

\item The group of automorphisms of the "neighborhood" coincides with the group of systemic permutations.

\item The group of "1-codirection" automorphisms coincides with the group of all positive or negative permutations.

\item The group $\Gamma(+1)$ coincides with the group of all positive permutations.

\item The group $\Gamma(<)$ coincides with the shifts' group.
\end{enumerate}
\end{M-lemma}

\begin{proof}
The first statement obviously follows from the definition. For the rest of the statements, the "right-to-left" proof consists in checking the conservation of relations in permutations. For example, it is clear that the neighborhood relation holds only for two elements of the same vertical and is preserved if the order on the vertical is preserved or reversed. 

The "left-to-right" proof includes, for example, the following consideration. Consider a mapping that preserves the neighborhood, then it translates two neighboring elements of the same vertical into two neighboring ones of some vertical, preserving or reversing their order. Induction and considerations of mutual unambiguity show that the vertical moves to the vertical and the order will be preserved or reversed on all elements of the vertical. 
\end{proof}

Note that the group from the previous lemma p.4 (as well as pp. 2, 3) initiate the  group $Sym(\mathbb Q)$ of all permutations of $\mathbb Q$. This is not surprising -- they correspond to "local" relations. 

So, in this paper we study the closed groups lying between the group of shifts (p.5) and the group of positive and negative permutations (p.3).

As we noted above, the structure of the definability lattice for the order of rational numbers is known (see for example \cite{cam}).

\begin{sen}\label{q-sen}
The group $Sym (\mathbb Q)$ contains exactly 5 closed subgroups containing the groups of shifts of $\mathbb Q$:

$$Sym(\mathbb Q),\Gamma(\langle \mathbb Q, S\rangle), \Gamma(\langle \mathbb Q, B\rangle), \Gamma(\langle \mathbb Q, C\rangle), \Gamma(\langle \mathbb Q, <\rangle)$$

 (see Fig. \ref{fig:Q-fig})
 \end{sen}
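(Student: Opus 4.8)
The plan is to classify all closed groups $G$ with $\Gamma(\mathbb Q,<)\subseteq G\subseteq Sym(\mathbb Q)$ by studying how $G$ merges the orbits of the shift group $\Gamma(\mathbb Q,<)$ on finite tuples; this is the classical reduct classification for the dense order (see \cite{cam}), and I would organise it as follows. First I reduce everything to finite data. The group $\Gamma(\mathbb Q,<)$ is oligomorphic and closed, so every closed supergroup $G$ is the automorphism group of the relational structure whose $n$-ary relations are the $G$-orbits on $\mathbb Q^n$; hence $G$ is completely determined by these orbits. On tuples of pairwise distinct rationals the orbits of $\Gamma(\mathbb Q,<)$ are exactly the relative orderings (the elements of $S_n$), since $\langle\mathbb Q,<\rangle$ is homogeneous and $\Gamma(\mathbb Q,<)$ is transitive on strictly increasing $n$-tuples. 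Thus $G$ is encoded by a family of equivalence relations $\sim_n$ on $S_n$ recording which order types $G$ identifies, and these families are constrained both by compatibility under passing to subtuples and by invariance under composition and inverse.

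Next I analyse triples. The aim is to show that $\sim_3$ can only be one of four partitions of the six orderings of a triple: the discrete one; the pairing of each ordering with its reversal (three classes); the grouping by cyclic orientation (two classes); or the total partition (one class). The discrete case forces $G=\Gamma(\mathbb Q,<)$; the reversal pairing is precisely the invariant $B$, giving $G=\Gamma(\mathbb Q,B)$; the cyclic grouping is precisely the invariant $C$, giving $G=\Gamma(\mathbb Q,C)$. Every other partition of the six orderings must be excluded by showing it is not stable under inverse and composition, i.e. that the local rearrangements it would require cannot close up into a group.

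When the triples are totally merged, $G$ acts transitively on triples and preserves no nontrivial ternary relation, so I pass to $4$-tuples, where the $\Gamma(\mathbb Q,S)$-orbit structure is generated by the separation relation $S$: either $G$ preserves $S$, forcing $G=\Gamma(\mathbb Q,S)$, or it does not, forcing $G=Sym(\mathbb Q)$. Finally I lift from finite tuples back to the named groups. Each of $\langle\mathbb Q,<\rangle,\langle\mathbb Q,B\rangle,\langle\mathbb Q,C\rangle,\langle\mathbb Q,S\rangle$ is homogeneous, so a back-and-forth argument shows that any permutation respecting the corresponding orbit pattern on all finite tuples actually lies in its automorphism group; this identifies $G$ with one of the five listed groups. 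The inclusions and their strictness are routine: one checks directly that order-automorphisms preserve $B$, $C$ and $S$, and exhibits separating witnesses, namely an order-reversal in $\Gamma(\mathbb Q,B)\setminus\Gamma(\mathbb Q,<)$, a rotation in $\Gamma(\mathbb Q,C)\setminus\Gamma(\mathbb Q,<)$, and so on.

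The main obstacle is the exhaustiveness in the triple and quadruple steps, that is, ruling out all ``exotic'' orbit-mergings and showing that once the triple pattern is fixed the group is squeezed between the named group it generates and its immediate successor. The content here is entirely combinatorial and group-theoretic, and it is this step, rather than the verification of the five inclusions, where the real work lies.
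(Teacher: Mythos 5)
This proposition is not proved in the paper at all: it is quoted as a known result, with a pointer to \cite{cam} (it is, in substance, the classical theorem on the reducts of the dense linear order, usually attributed to Cameron). So there is no in-paper argument to compare yours against; the only question is whether your outline would stand as a self-contained proof, and as written it would not.

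Your reduction to orbit data is fine (a closed subgroup of $Sym(\mathbb Q)$ is the automorphism group of its canonical structure, and by homogeneity the $\Gamma(\langle\mathbb Q,<\rangle)$-orbits on injective $n$-tuples are the order types), and the final back-and-forth lifting from orbit patterns to the five named groups is also standard and workable. The gap is that the two steps carrying all the content are announced rather than carried out. First, the claim that the only possible partitions $\sim_3$ of the six order types of a triple are the discrete one, the reversal pairing, the cyclic one, and the total one is the combinatorial heart of the arity-3 analysis; you only say that every other partition ``must be excluded,'' which is the thing to be proved. Second, and more seriously, in the totally-merged case the dichotomy ``either $G$ preserves $S$, forcing $G=\Gamma(\langle\mathbb Q, S\rangle)$, or it does not, forcing $G=Sym(\mathbb Q)$'' is precisely the deepest part of the theorem: one must show that a closed group containing the shifts which preserves none of $<$, $B$, $C$, $S$ is dense in $Sym(\mathbb Q)$, i.e.\ highly transitive. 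Asserting this dichotomy is essentially asserting the proposition itself, so the argument is circular at its critical point. You correctly diagnose that ``the real work lies'' in exactly these steps, but a proof has to do that work; the honest alternatives are to execute those two exhaustiveness arguments (e.g.\ along the lines of Cameron's proof) or to do what the paper does and cite the result.
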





Further in this section, $\Gamma^*$ is a closed subgroup of the group $\Gamma({A_1}_1)$. In particular, all elements of $\Gamma^*$ are systemic elements and initiation is defined for them. We also remember that $\Gamma^*$ is a supergroup of the shifts' group.

Multiplication of element of $\Gamma({A_1}_1)$ by arbitrary vertical shift does not affect the initiated element of $Sym (\mathbb Q)$. Up to this multiplication every element of $Sym (\mathbb Q)$ has exactly two elements of $\Gamma({A_1}_1)$ that initiate it.

According to the lemma \ref{sys} and the statement \ref{q-sen}, the group $\Gamma^*$ initiates one of the 5 specified groups.

We will consider each of these options and describe the group $\Gamma^*$ for them. It is essential, of course, that  different  groups of automorphisms can initiate the same group.

It is easy to notice that each permutation from the group $Sym(\mathbb Q)$ is initiated up to vertical shifts exactly by one positive and one negative element of the group $\Gamma({A_1}_1)$. Therefore, we have 
\begin{M-lemma}\label{triv}
 Each group of $\Gamma^* \subset \Gamma({A_1}_1)$ is completely determined by the initiated subgroup of the group $Sym(\mathbb Q)$ and indication which elements of this subgroup are initiated by a positive and which by a negative permutation from $\Gamma^*$.  
\end{M-lemma}

Let us note, that one element can be initiated to two different elements. This happens for elements of the group $\Gamma({A_1}_1)$.
We will use the following trivial
\begin{sen}\label{gr-sen} 
If the group $\Gamma^*\supset\Gamma(+1)$ contains at least one negative permutation, then it contains all negative permutations.
\end{sen}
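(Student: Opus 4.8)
The plan is to exploit the fact that the labels \emph{positive} and \emph{negative} behave like a multiplicative sign on systemic permutations: composing two systemic permutations multiplies their signs. First I would record the elementary observation that the product of two negative permutations preserves the order on each vertical, hence is positive, and that the inverse of a negative permutation is again negative. Both facts are immediate from the definitions, since a systemic permutation carries a vertical onto a vertical and its effect on the order there is either a reversal (negative) or not (positive), and these reversals compose in the obvious way.

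Granting this, I would fix the negative permutation $g \in \Gamma^*$ supplied by the hypothesis and take an arbitrary negative permutation $h$. Writing $h = (h g^{-1})\,g$, I observe that $g^{-1}$ is negative, so $h g^{-1}$ is a composition of two order-reversing systemic maps and is therefore positive. By Lemma~\ref{sys}(4) the group $\Gamma(+1)$ is exactly the group of all positive permutations, so $h g^{-1} \in \Gamma(+1) \subseteq \Gamma^*$. Since $g \in \Gamma^*$ and $\Gamma^*$ is a group, it follows that $h = (h g^{-1})\,g \in \Gamma^*$; as $h$ was an arbitrary negative permutation, $\Gamma^*$ contains all of them.

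Equivalently, one may phrase the argument through an index-two structure. By Lemma~\ref{sys}(3) the group $\Gamma({A_1}_1)$ is the disjoint union of the positive permutations and the negative permutations, and by Lemma~\ref{sys}(4) the positive half is precisely $\Gamma(+1)$, a subgroup of index two. Any subgroup that contains an index-two subgroup together with at least one element outside it must coincide with the whole group; hence $\Gamma^* = \Gamma({A_1}_1)$, which contains every negative permutation.

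I do not expect any genuine obstacle here — the paper itself flags the statement as trivial. The only point needing (easy) justification is the multiplicativity of the sign, namely that the composition of two order-reversing systemic maps is order-preserving on each vertical; everything else is formal group theory combined with the already-established identification of $\Gamma(+1)$ with the positive permutations in Lemma~\ref{sys}.
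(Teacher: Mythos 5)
Your proof is correct and takes essentially the same approach as the paper's: the paper also factors an arbitrary negative permutation $h$ through the given negative $g$, writing $h = g\circ(g^{-1}\circ h)$ (versus your $h = (h g^{-1})\,g$), and uses that the composition of two negative permutations is positive, hence lies in $\Gamma(+1)\subseteq\Gamma^*$. Your index-two reformulation is just a repackaging of the same observation, so nothing genuinely different is added.
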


\begin{proof}
 Let $g$ be a given negative permutation, and $h$ be an arbitrary negative one. Then $g^{-1}$ is negative, $g^{-1}\circ h$ is the positive element of $\Gamma^*$. So $g \circ g^{-1} \circ h $ lies in $\Gamma^*$.
\end{proof}

To analyze all 5 options, we will use lemma \ref{ss}  below, and the concept of $m$-indistinguishability. The lemma \ref{m} establishes, using this concept, in a sense, the "locality" of all relations definable in our basic structure. Similar constructions were used in \cite{we}.

Let's add to $\mathbb Z$ element $\infty$  with the  obvious semantics, we assume that $\infty>m$ for any natural $m$ and $|\infty|=\infty$.

Let's define the function "$-$" that maps $(\mathbb Q\times\mathbb Z)^2$ to $\mathbb Z \cup \{{\infty}$\} so that $a-b=\infty$ if $a,b$ lie on different verticals and $\langle r,z_1\rangle-\langle r,z_2\rangle=z_1-z_2$.

If $m$ is a natural number, then two vectors $a_1,a_2,\dots,a_n$ and $b_1,b_2,\dots,b_n$ of equal length are called \emph{m-indistinguishable} if two requirements hold for all $i,j\leqslant n$:
$$a_i<a_j\equiv b_i<b_j,$$
 $$(|a_{i}-a_j|\leqslant m\lor|b_{i}-b_j|\leqslant m)\to (a_{i}-a_j=b_{i}-b_j).$$
 

Let's note a few simple properties of $m$-indistinguishability:

(i) if the vectors $\bar a,\bar b$ are  $m$-indistinguishable, then they are $k$-indistinguishable for every $k<m$.

(ii) if the vectors $\bar a=\langle a_1,\dots, a_n\rangle,  \bar b=\langle b_1,\dots,b_n\rangle$ are $m$-indistinguishable for all natural $m$, then the mapping $\varphi(a_i)=b_i$ can be extended to a
 shift, therefore $R(\bar a)\equiv R(\bar b)$ for every definable $R$.

Let's call the number $m$ \emph{the boundary} for a relation $R$ if $R(\bar a)\equiv R(\bar b)$ for every $m$-indistinguishable $\bar a$ and $\bar b$.

We shall need an analogue of Lemma 4.4 from \cite{we}. 

More precisely, 
\begin{M-lemma}\label{m}
 Every definable relationship has a boundary.
\end{M-lemma}

\begin{proof}
    The proof is similar to the proof of Lemma 4.4 from \cite{we}, we present it here for completeness.

    Consider the enrichment $Z^+$ of the  structure of $\langle Z,<\rangle$ by the ternary relation +. Of course, all integers are definable in the resulting structure. For every natural $n$ the relation $E_n(m,\bar a,\bar b)$ , stating that the vectors $\bar a,\bar b$ of length $n$ are $m$-indistinguishable, is definable. In the structure $\langle Z,<\rangle$ for each fixed $n,m$, the binary relation $E^*_{n,m}(a,b)$ , equivalent to $E_n(m,\bar a,\bar b)$ is definable; for each $m$ by  a different formula. 
    
    Let's take some non-standard elementary extension $Z^*$ of the  structure $Z^+$. Let $R$ be some $n$-ary relation definable in $\langle Z, <\rangle$. We will show that 
   
    $$ (*) Z^*\vDash (\forall \bar a,\bar b)(E_n(m,\bar a,\bar b)\to (R(\bar a)\equiv R(\bar b))$$ for some natural $m$.

    Indeed, due to properties (i), (ii) in the definition of $m$-indistinguishability, the property $(*)$ is fulfilled for a positive non-standard element $m_0$, and, consequently, for some natural $m$. Then 
    $$\langle Z,<\rangle\vDash (\forall \bar a,\bar b)(E^*_{n,m}(\bar a,\bar b)\to (R(\bar a)\equiv R(\bar b))$$
    which is equivalent to the statement of the lemma.    
\end{proof}
\begin{M-lemma}\label{ss}

Let a group $\Gamma^*$ be
a supergroup of the shift group and 

(i) if $\Gamma^*$ contains such a positive permutation $g$, that for some four verticals $\alpha<\beta<\gamma<\delta$ holds $g(\alpha)>g(\beta)>g(\gamma)>g(\delta)$,

then $\Gamma^*\supset \Gamma(+1)$.

(ii)  $\Gamma^*$ contains such a negative permutation $g$, that for some four verticals $\alpha<\beta<\gamma<\delta$ holds $g(\alpha)<g(\beta)<g(\gamma)<g(\delta)$,

then $\Gamma^*\supset \Gamma({A_1}_1)$.

In particular, $\Gamma^*$ initiates $Sym(\mathbb Q)$.

\end{M-lemma}
\begin{proof}
Let $R$ be an arbitrary definable relation preserved by the group $\Gamma^*$. Let the vector $\bar a=\langle a_1,a_2,\dots ,a_p\rangle$ has the form
$\langle \bar   b_1, \bar b_2,\dots\bar b_n\rangle$, where for each $i$ all elements of $\bar b_i$ lie on the same vertical, all these verticals are different, and 
$\bar b_1<\bar b_2<\dots \bar b_n$ (here inequality, of course, holds between elements of verticals).
Let's call \emph{blocks} all these $\bar b_i$ .

We are going to show that if the definable relation $R$ is true on the vector $\langle\bar b_1, \bar b_2,\dots\bar b_n\rangle$, then it is true on each vector obtained by permutation of blocks $\bar b_i$. 
To do this, we will use "partial isomorphism" -- embed the elements of the vector $\bar a$ into the main structure $\mathbb Q\times\mathbb Z$, preserving the order between these elements, and preserving the distances inside one block, we will say: "place" the vector $\bar a$.
$ $\\

Case (i)
\begin{proof}
Let the mapping $g$ satisfy the condition (i) of the lemma, i.e. $g$ changes the order of the verticals $\alpha<\beta<\gamma<\delta$, preserving the order on each of them.
    Let's take arbitrary $k,l,m$ such that  $1\leqslant k \leqslant l\leqslant m\leqslant n$ and divide the elements  $\bar b_1<\bar b_2<\dots \bar b_n$ into four groups 

$\{\bar b_i|1\leqslant i \leqslant k\};\{\bar b_i|k < i \leqslant l\};\{\bar b_i|l < i \leqslant m\};\{\bar b_i|m< i \leqslant n\}$.

Let's "place" the blocks from the first group on the vertical $\alpha$, the second on the vertical $\beta$, the third on the vertical $\gamma$, the fourth on the vertical
$\delta$ so that the distance between adjacent blocks on the same vertical exceeds the boundary for the ratio $R$.
We get a vector $\bar a'$ such that $R(\bar a) \equiv R(\bar a')$

 Let 
$\bar b'=g(\bar a')$, so $R(\bar b')\equiv R(\bar a')$. Then:
$ $\\

(*) in the vector $\bar b'$, the blocks go in the  order:\\ $\bar b_{m+1},\dots,\bar b_n,\bar b_{l+1},\dots, \bar b_m,\bar b_l,\dots, \bar b_{k+1},\bar b_1,\dots,\bar b_k$.\label{prf}
$ $\\

It is clear that by applying such vector transformations for different $k,l,m$ we can obtain every predetermined order of blocks.

Thus, any relation preserved by the group $\Gamma^*$ is preserved by any finite permutation of $\Gamma(+1)$.

The group $\Gamma^*$\underline{is closed}, so any permutation preserving the family of relations defined by +1 belongs to the group. Thus $\Gamma^*\supset\Gamma(+1)$.

Case (i) has been considered.
\end{proof}
$ $\\

Case (ii)
\begin{proof}
Let $g$ preserve the order of the verticals $\alpha<\beta<\gamma<\delta$, reflecting the order on each of them. Let's "place" the blocks $\bar b_1,\bar b_2,\dots,\bar b_n$ on the vertical $\alpha$ so that the distance between adjacent blocks exceeds the boundary for the relation $R$ and apply the permutation $g$. Since this permutation reverses the order on $\alpha$, we get the vector $\bar a'=\bar b'_n<\bar b'_{n-1}<\dots<\bar b'_1$ and each block $\bar b'_i$ is also "inverted" -- elements in the block $\bar b'_i$ go in reverse order compared to $\bar b_i$.

Let's take arbitrary $k,l,m$ such that $1\leqslant k\leqslant l\leqslant m\leqslant n$ and divide the elements $\bar b'_n<\bar b'_{n-1}<\dots<\bar b'_1$ into four groups 

$\{\bar b'_i|l+1\leqslant i \leqslant n\};\{\bar b'_i|k+1\leqslant i \leqslant l\};\{\bar b'_i|1\leqslant i \leqslant k\}$.

Let's take the verticals $\alpha,\beta, \gamma,\delta$ and "place" the blocks from the first group on the vertical $\alpha$, the second on the vertical $\beta$, the third on the vertical $\gamma$, the fourth on the vertical
$\delta$ so that the distance between adjacent blocks on the same vertical exceeds the boundary for the ratio $R$, we get the vector $\bar a'$, apply the permutation $g$ and get the vector $\bar b=g(\bar a')$. Since this permutation reverses the order on each of these verticals, the original order of the blocks in each group will be restored in the vector $\bar b$, as well as the original order of the elements in each block. In other words, the vector $\bar b$ has the form $\bar b_{m+1},\dots, \bar b_n,\bar b_{l+1},\dots,\bar b_m, \bar b_{k+1},\dots,\bar b_l,\bar b_1,\dots,\bar b_k$.  Thus, the configuration (*) from part (i) of the proof is obtained.

Similarly to the case (i) and using Proposition \ref{gr-sen} we obtain $\Gamma^*\supset \Gamma({A_1}_1)$.

The proof in case (ii) has been completed.
\end{proof}

In both cases $\Gamma^*$ initiates $Sym(\mathbb Q)$.
The proof of Lemma \ref{ss}  is completed.
\end{proof}

$ $\\

Now, we come to the consideration of the listed variants of subgroups initiated by the group $\Gamma^*$:

$ Sym(\mathbb Q)$ (Equality),

$\Gamma(\langle \mathbb Q, S\rangle)$ (Separation),

$\Gamma(\langle \mathbb Q, B\rangle)$ (Between), 

$\Gamma(\langle \mathbb Q, C\rangle)$ (Cycle),

$\Gamma(\langle \mathbb Q, <\rangle)$ (Order).
$ $\\

For each of these variants we know at list one group that initiates it. Our problem is to find all such (closed) groups.
$ $\\

\textbf{$\Gamma^*$ initiates $Sym(\mathbb Q)$ -- Equality}

Recall that the group in comsideration is a subgroup in $\Gamma({A_1}_1)$. Therefore, any permutation in it is either positive or negative.
If all permutations of $\Gamma^*$ are positive, then according to Lemma \ref{sys}, clause 4, this is the group $\Gamma(+1)$. 

Let there be at least one negative among the permutations.

Since $\Gamma^*$ initiates $Sym(\mathbb Q)$, then any permutation from $Sym(\mathbb Q)$ is initiated by some (positive or negative) permutation from $\Gamma^*$. Let's take an arbitrary permutation $g$ of $Sym(\mathbb Q)$, which simultaneously preserves the order of some 4 elements of $\mathbb Q$ and reverses the order of some 4 (other) elements.  According to the lemma \ref{ss}, whichever (positive or negative) permutation of $\Gamma^*$ initiates $g$,
 $\Gamma^*\supset\Gamma(+1)$ holds.

So, according to the statement \ref{triv}, $\Gamma^*$ contains all negative permutations, inducing permutations from Lemma \ref{ss}(ii). 
This implies, according to the lemma \ref{ss}, that $\Gamma^*$ coincides with $\Gamma({A_1}_1)$.

There are no other options. Thus, a variant of $Sym(\mathbb Q)$ has been considered and has given to us two spaces of the Diagram 2.
$ $\\

So, we continue with other four cases.
$ $\\

We will use a definition from \cite{perm} for the special case of the order of rational numbers.

\emph{A section} is a pair of $\{I_1,I_2\}$ subsets of $\mathbb Q$ (one of which may be empty) such that $I_1 \cup I_2=\mathbb Q;$ and for all   $a\in I_1, b\in I_2$ holds $a<b$.
$ $\\

The following description of the elements of the group $\Gamma(\langle\mathbb Q, S\rangle)$ is given in the main theorem of \cite{perm}:

\begin{sen}

The permutation $h\in Sym(\mathbb Q)$ belongs to $\Gamma(\langle\mathbb Q, S\rangle)$ if and only if exactly one of two conditions is met for a certain section $\{I_1,I_2\}$:
$ $\\

(i) $g$ decreases by both $I_1$ and $I_2$, preserving their order\\

(ii) $g$ increases by both $I_1$ and $I_2$, reversing their order.
\end{sen}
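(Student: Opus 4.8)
The plan is to reduce the whole statement to the cyclic order relation $C$, which is already defined on $\mathbb Q$ in the paper. The starting point is the identity, valid for pairwise distinct $a,b,c,d$,
$$S(a,b,c,d)\;\equiv\;\big(C(a,b,c)\not\equiv C(a,d,c)\big),$$
which one verifies directly from the definitions of $B$, $C$ and $S$: the pair $\{a,c\}$ separates $\{b,d\}$ exactly when $b$ and $d$ lie on the two different arcs cut out by $a$ and $c$. Thus $S$ is produced from $C$ by a construction that is insensitive to a global reversal of $C$, and the whole proposition becomes the claim that $\Gamma(\langle\mathbb Q,S\rangle)$ consists precisely of the permutations preserving $C$ on every triple (which will turn out to be exactly condition (ii)) together with those reversing $C$ on every triple (condition (i)).

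For the ``if'' direction I would first show that a permutation of type (ii) preserves $C$ and one of type (i) reverses $C$, and then conclude via the displayed identity. The check is a short case analysis on how the three arguments of a triple are distributed between $I_1$ and $I_2$ (all three in one block, or split $2$--$1$): in case (ii) $g$ is increasing on each block and the blocks are interchanged, so each configuration is sent to a cyclically rotated one and $C$ is preserved; in case (i) $g$ is decreasing on each block while the blocks keep their position, which reverses the cyclic order of every triple. Once $g$ uniformly preserves, or uniformly reverses, $C$, the displayed identity gives $S(ga,gb,gc,gd)\equiv S(a,b,c,d)$, since the relation ``the two cyclic values differ'' is unchanged when both of them are negated.

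For the ``only if'' direction, let $g$ preserve $S$. To each triple of distinct points assign the sign $+$ if $g$ preserves $C$ on it and $-$ if $g$ reverses it; using $C(x,y,z)=C(y,z,x)=\lnot C(x,z,y)$ one sees this sign is well defined and symmetric in the three arguments. The key step, which is now essentially formal, is that the sign is constant. Applying the identity to the quadruple $(a,c,b,d)$ gives $S(a,c,b,d)\equiv\big(C(a,b,c)\not\equiv C(a,b,d)\big)$, so $S$-preservation yields the equivalence of $\big(C(a,b,c)\not\equiv C(a,b,d)\big)$ with $\big(C(ga,gb,gc)\not\equiv C(ga,gb,gd)\big)$; feeding in the signs of the triples $\{a,b,c\}$ and $\{a,b,d\}$ forces them to agree, for otherwise one side is the negation of the other. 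Hence the sign does not change when the third point is varied, and by symmetry it depends on none of the three points, so it is a global constant $\pm$. This also establishes the exclusivity of (i) and (ii).

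It remains to convert the two global cases into section form, and I expect this to be the main obstacle. If $g$ preserves $C$ everywhere then $g\in\Gamma(\langle\mathbb Q,C\rangle)$, and I would locate the unique ``wrap-around'' of $g$: fixing a reference value $s_0$ in the range, the preimages of $\{x\ge s_0\}$ and of $\{x< s_0\}$ split $\mathbb Q$ into two sets on each of which cyclic-order preservation forces genuine monotonicity, and one checks these two sets constitute a section $\{I_1,I_2\}$ realizing condition (ii) (with the degenerate cut $I_2=\emptyset$ giving the globally increasing maps). Symmetrically, if $g$ reverses $C$ everywhere the same cut construction delivers condition (i) (with $I_2=\emptyset$ the globally decreasing maps). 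The delicate part is exactly this extraction: showing that cyclic (anti-)monotonicity admits a single break point and handling the boundary cases where a block is empty or the cut is irrational; the coherence argument above, by contrast, is routine once the $S$–$C$ identity is in hand. Alternatively, one may simply invoke the known descriptions of $\Gamma(\langle\mathbb Q,C\rangle)$ and $\Gamma(\langle\mathbb Q,B\rangle)$ recorded in Proposition \ref{q-sen} to read off the rotation/reflection form directly.
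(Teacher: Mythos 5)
The paper itself contains no proof of this proposition: it is imported verbatim as the main theorem of \cite{perm}, so your attempt can only be measured against what such a proof must contain. Two of your three stages are correct and would be a fine way to organize it. The identity $S(a,b,c,d)\equiv\bigl(C(a,b,c)\not\equiv C(a,d,c)\bigr)$ for pairwise distinct arguments is right, and your sign-coherence argument is valid: applying $S$-preservation to the quadruple $(a,c,b,d)$ forces any two triples sharing two points to be simultaneously $C$-preserved or simultaneously $C$-reversed, and a chain of one-point replacements (always available in $\mathbb Q$) makes the sign global. This correctly reduces the proposition to the claim that the globally $C$-preserving permutations are exactly those of form (ii) and the globally $C$-reversing ones exactly those of form (i), and it also gives the exclusivity.

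The gap is in that remaining claim, which is precisely the geometric content of \cite{perm}'s theorem, and your extraction recipe for it fails. Let $g$ be a nontrivial $C$-preserving permutation (rotation), with domain section $\{I_1,I_2\}$ and image section $\{J_1,J_2\}$, where $g(I_2)=J_1<J_2=g(I_1)$ and all four blocks are nonempty. First, no admissible reference value exists: $J_2$ has no least element, because $g^{-1}$ is an order isomorphism from $J_2$ onto the initial segment $I_1$, which, being nonempty and downward closed, has none (similarly $J_1$ has no greatest element); so both cuts are gaps and $\{x\ge s_0\}=J_2$ is never achievable. Consequently every $s_0$ lies strictly inside $J_1$ or $J_2$; say $s_0\in J_2$. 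Then $g^{-1}(\{x<s_0\})=I_2\cup\{x\in I_1: g(x)<s_0\}$ is the union of a final segment and an initial segment of $\mathbb Q$, so the two preimages never form a section; moreover $g$ is not monotone on this set, since its initial piece (inside $I_1$) is mapped into $J_2$, which lies entirely above the image $J_1$ of its final piece $I_2$. So both assertions in your extraction step — ``genuine monotonicity'' on each preimage and ``these two sets constitute a section'' — are false for every nontrivial rotation and every choice of $s_0$. Your fallback does not close this either: Proposition \ref{q-sen} only asserts that there are exactly five closed supergroups of the shift group; it contains no description of the \emph{elements} of $\Gamma(\langle\mathbb Q,C\rangle)$, and such an element-wise description is exactly what is at stake. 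What is needed instead is a genuine structure argument, for instance: using $C$-preservation, show that the set of order-reversed pairs $\{(x,y): x<y,\ g(x)>g(y)\}$ is precisely a rectangle $I_1\times I_2$ for some section $\{I_1,I_2\}$ and that $g$ is increasing on each block; form (ii) then follows, and form (i) follows symmetrically by composing with one fixed reflection.
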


\textbf{$\Gamma^*$ initiates $\Gamma(\langle\mathbb Q, S\rangle)$ -- Separation}

Let's show that
\textbf{the group $\Gamma(S)$ consists of all negative permutations initiating permutations satisfying (i) and all positive permutations initiating permutations satisfying (ii)}.

\begin{proof}
$ $

It is easy to see that
the group satisfying the specified description preserves the relation $S$. 
$ $

On the other hand, such a group is maximal among those preserving the relation $S$. To prove this, consider its arbitrary proper supergroup $G$. 
$ $\\

If $G$ initiates a proper supergroup of the group $\Gamma(\langle\mathbb Q,S\rangle)$, then this  supergroup coincides with $Sym(\mathbb Q)$. Indeed,
 as is known (Fig.\ref{fig:Q-fig}), there is no intermediate group between the groups $\Gamma(\langle \mathbb Q,O\rangle)=Sym(\mathbb Q)$ and $\Gamma(\langle\mathbb Q,S\rangle)$. This option was discussed above -- \textbf{$\Gamma^*$ initiates $Sym(\mathbb Q)$}.

If the supergroup $G$  initiates $\Gamma(\langle\mathbb Q,S\rangle)$, then it contains a positive and a negative permutations, say $g_+$ and $g_-$ that initiate the same element $h$ of the group $\Gamma(\langle\mathbb Q, S\rangle)$.

If for $h$ condition $(i)$ of Proposition 4 holds then $g_+$ decreases on some infinite element of the section and for $g_+$ holds $(i)$ of lemma \ref{ss}.

If for $h$ condition $(ii)$ of Proposition 4 holds then $g_-$ increases on some infinite element of the section and for $g_-$ holds $(ii)$ of lemma \ref{ss}.

In the both cases the proper supergroup initiate $Sym(\mathbb Q)$).
\end{proof}

\textbf{$\Gamma^*$ initiates $\Gamma(\langle\mathbb Q, B\rangle)$ -- Between}

We show that \textbf{$\Gamma(B)$ consists of all positive permutations initiating the shift (increasing mapping on $\mathbb Q$) and all negative ones initiating the reflection (decreasing mapping on $\mathbb Q$).}
\begin{proof}

As in the Separation variant, we directly check that such a group preserves the relation $B$.

Further, as in the Separation variant, we note that this group is maximal among such groups. Indeed, let $H$ a group containing it. $H$ can initiate one of the supergroups $\Gamma(\langle\mathbb Q, B\rangle)$. That was considered above. $H$ can have an element satisfying one of the conditions (i) or (ii) of the lemma \ref{ss}, so the supergroup initiates $Sym(\mathbb Q)$. This we considered also.
\end{proof}

\textbf{$\Gamma^*$ initiates $\Gamma(\langle\mathbb Q, C\rangle)$ -- Cycle}

We show that \textbf{the group $\Gamma(C)$ consists of all positive permutations  from $\Gamma(S)$ 
}.

\begin{proof}

We directly check that the described group group preserves the relation $C$.

We note that this group is maximal among such groups -- the groups containing it must either initiate the supergroups $\Gamma(\langle\mathbb Q, C\rangle)$ (discussed above) or has an element satisfying the condition (ii) of the lemma \ref{ss}, which is impossible.
\end{proof}

\textbf{$\Gamma^*$ initiates the group $\Gamma(\langle\mathbb Q, <\rangle)$ -- Order}

We show that \textbf{the group consists of all positive permutations initiating elements of  $\Gamma(\langle\mathbb Q, <\rangle)$ that preserves the order (<)}.

\begin{proof}
    
    As in previous cases every proper supergroups of this group is considered above or has a negative element then by lemma \ref{ss} (ii) it should be $Sym(\mathbb Q)$. Contradiction.  
\end{proof}

\section{Open problems}

We have already mentioned that our research on definability began with a question from P. S. Novikov, which was given to us by Albert Abramovich Muchnik. It was about describing the definability lattice for $\langle\mathbb Z ,\{+\}\rangle$. It gradually became clear that the situation is much more complicated and interesting than it seemed at first glance, based on the simplicity of the structure of the addition of integers. 

Simplifying the problem, we came to the order of rational numbers and rediscovered the result mentioned in this article on the definability lattice for this structure \cite{cam}.

The next step was to describe the lattice for $\langle\mathbb Z ,\{+1\}\rangle$\cite{we}.

This work is another advancement. At the same time, we still can not obtain a complete description of the definability lattice for $\langle\mathbb Z ,\{<\}\rangle$. The situation here also looks difficult. We see the following cases:

1. The simplest next step is to move from 1-codirection to $n$-co-direction. What does a lattice of spaces greater than some $n$-co-direction (and, of course, smaller then <) space look like?

2. Directly below the 1-co-direction in our Diagram lies the 1-neighborhood. The subgroups of the 1-neighborhood automorphism group are systemic, that is, they map each verticals onto a vertical, perhaps with a flip. What, besides the spaces described in this paper, lies between the 1-neighborhood and the order <?

3. It seems that the question of the elements of our lattice that are not comparable to the 1-neighborhood is more complicated. We do not know of any such element.

\end{document}